\newcounter{intro}
\newtheorem{theo}[intro]{Theorem}
\newtheorem{thm}{Theorem}[section]
\newtheorem{lem}[thm]{Lemma}
\newtheorem{prop}[thm]{Proposition}
\newtheorem{cor}[thm]{Corollary}
\theoremstyle{remark}
\newtheorem{rem}[thm]{Remark}
\newtheorem*{merci}{Acknowledgements}
\numberwithin{equation}{section}   
\newcounter{counteroman}
\newcommand{\cref}[1]{Corollary~\ref{#1}}
\newcommand{\lref}[1]{Lemma~\ref{#1}}
\newcommand{\pref}[1]{Proposition~\ref{#1}}
\newcommand{\rref}[1]{Remark~\ref{#1}}
\newcommand{\tref}[1]{Theorem~\ref{#1}}
\newcommand{\sref}[1]{Section~\ref{#1}}
\newcommand{\R}{\mathbb{R}}
\newcommand{\C}{\mathbb{C}}
\newcommand{\bB}{\mathbb{B}}
\newcommand{\N}{\mathbb{N}}
\newcommand{\Z}{\mathbb{Z}}
\newcommand{\bS}{\mathbb{S}}
\newcommand{\bP}{\mathbb{P}}
\newcommand{\mcC}{\mathcal{C}}
\newcommand{\bpm}{\begin{pmatrix}}
\newcommand{\epm}{\end{pmatrix}}
\newcommand{\ps}[2]{\left\langle#1|#2\right\rangle}
\newcommand{\nm}[1]{\left|#1\right|}
\newcommand{\nnm}[1]{\left\|#1\right\|}
\newcommand{\nnnm}[1]{\left|\left|\left|#1\right|\right|\right|}
\newcommand{\nnmr}[1]{\bigl\|#1\bigr\|}
\newcommand{\rstg}{\mathring{Ric_g}}
\let\ve=\varepsilon
\let\vp=\varphi
\DeclareMathOperator{\SO}{SO}
\DeclareMathOperator{\Id}{Id}
\DeclareMathOperator{\Rm}{Rm}
\DeclareMathOperator{\Ric}{Ric}
\DeclareMathOperator{\scal}{Scal}
\DeclareMathOperator{\vol}{vol}
\DeclareMathOperator{\Y}{Y}
\def\and{\,\, \mathrm{and}\,\,}
\begin{document}

\title[A sphere theorem for three dimensional manifolds with integral pinched curvature]
{A sphere theorem for three dimensional manifolds\\ with integral pinched curvature}

\author{Vincent Bour}
\email{Vincent.Bour@gmail.com} 
\author{Gilles Carron}
\email{Gilles.Carron@univ-nantes.fr}
\address{Laboratoire de Math\'ematiques Jean Leray (UMR 6629), Universit\'e de Nantes, 
2, rue de la Houssini\`ere, B.P.~92208, 44322 Nantes Cedex~3, France}

\date{\today}
\begin{abstract}
In \cite{BC}, we proved a number of optimal rigidity results for Riemannian manifolds of dimension greater than four whose curvature satisfy an integral pinching. In this article, we use the same integral Bochner technique to extend the results in dimension three. Then, by using the classification of closed three-manifolds with nonnegative scalar curvature and a few topological considerations, we deduce optimal sphere theorems for three-dimensional manifolds with integral pinched curvature.
\end{abstract}
\maketitle
\section{Introduction}

A celebrated result of R. Hamilton is the classification of closed three dimensional manifolds $(M^3,g)$ endowed with a Riemannian metric with non negative Ricci curvature (see \cite{H1} for metric with positive Ricci curvature and \cite{H2} for the case of nonnegative Ricci curvature). The result is that \textit{such Riemannian manifold $(M^3,g)$ is either:
\begin{itemize}
\item a flat manifold,
\item isometric to a quotient of the Riemannian product $\R\times \bS^2$ where $\bS^2$ is endowed with a round metric of constant Gaussian curvature,
\item or diffeomorphic to a space form: there is finite group $\Gamma\subset O(4)$ acting freely on $\bS^3$ such that 
$M^3=\bS^3/\Gamma$.
\end{itemize}}

This classification has been obtained with the Ricci flow and this result is certainly the first main milestone in the success of the Ricci flow.

In dimension four, a similar result has been obtained by C.~Margerin \cite{Margerin}: \textit{a closed \mbox{$4$-manifold }$M^4$ carrying a Riemannian metric with positive scalar curvature and whose
curvature tensor satisfies :
\begin{equation}\label{margerinpinching}
\|W_g\|^2+\frac12 \|\rstg\|^2\le \frac{1}{24}\scal_g^2
\end{equation} 
is either :
\begin{itemize}
\item isometric to $\bP^2(\C)$ endowed with the Fubini-Study metric $g_{FS}$,
\item isometric to a quotient of $\R\times \bS^3$ where  $\bS^3$ is endowed with the round metric of constant sectional curvature.
\item or diffeomorphic to $\bS^4$ or $\bP^4(\R)$.
\end{itemize}}
In the inequality (\ref{margerinpinching}), $W_g$ denotes the Weyl tensor of the metric $g$ and
\begin{equation*}
  \rstg=\Ric_g-\frac14 \scal_g g
\end{equation*}
is the traceless Ricci tensor. In fact, the above curvature pinching (\ref{margerinpinching}) implies that the Ricci curvature of $g$ is non-negative.

This result was a generalization of the classification of closed Riemannian $4$-manifold $(M^4,g)$ with positive curvature operator in \cite{H2}. 
This classification is now valid in all dimensions thanks to the work of C. B\"ohm, B. Wilking \cite{BW} and has been generalized by S.Brendle and R.Schoen to others pinching conditions \cite{BS1,BS2}.

These rigidity results, among many others in Riemannian geometry, involve what is called a ``pointwise curvature pinching'' hypothesis. The curvature is supposed to satisfy some constraint at every point of the Riemannian manifold, and strong restrictions on the topology of the manifold follow. 

Some of these results have been extended to manifolds that only satisfy the constraint in an average sense, i.e. that satisfy an integral curvature pinching.  For instance, Margerin's results was extended by A. Chang, M. Gursky and P. Yang in  \cite{changall1,changall2}, they show that\textit{ if $(M^4,g)$ is a closed Riemannian manifold with positive Yamabe invariant satisfying
\begin{equation}\label{margerinint}
\int_M\left(\|W_g\|^2+\frac12 \|\rstg\|^2\right)dv_g\le \frac{1}{24}\int_M\scal_g^2dv_g,
\end{equation} 
then $(M^4,g)$ is either :
\begin{itemize}
\item conformal  to $\bP^2(\C)$ endowed with the Fubini-Study metric $g_{FS}$,
\item conformal to a quotient of $\R\times \bS^3$ where  $\bS^3$ is endowed with the round metric of constant sectional curvature.
\item or diffeomorphic to $\bS^4$ or $\bP^4(\R)$.
\end{itemize}}

We recall that the \textit{Yamabe invariant} of a closed Riemannian manifold $(M^n,g)$ is the conformal invariant defined as :
$$Y(M^n,[g])=\inf_{\substack{\tilde g=e^{f}g\\ f\in \mcC^\infty(M)}} \vol(M,\tilde g)^{\frac2n -1}\int_M \scal_{\tilde g}dv_{\tilde g}.$$

In fact, all the hypotheses of the theorem are conformally invariant: by using the Gauss-Bonnet formula, the condition (\ref{margerinint}) is equivalent to
$$\int_M \|W_g\|^2dv_g\le 4\pi^2 \chi(M)$$ where $ \chi(M)$ is the Euler characteristic of $M$.

In dimension three, integral versions of the result of Hamilton have also been proved. For instance according to G.~Catino and Z.~Djadli \cite{CD} or Y.~Ge, C-S.~Lin and G.~Wang \cite{Ge}, \textit{a closed $3-$manifolds $(M^3,g)$ with positive scalar curvature such that 
\begin{equation}\label{3Dpinching}
\int_M \|\rstg\|^2dv_g\le \frac{1}{24}\int_M \scal_g^2dv_g
\end{equation} 
is diffeomorphic to a space form.}

However, this result is not optimal, and doesn't contain a caracterization of the equality case.

The strategy of the proof of these two integral pinching sphere theorems is to solve a fully nonlinear PDE in order to find a conformal metric 
$\bar g=e^{2f}g$ that satisfies Margerin's pointwise pinching in dimension $4$ or that has positive Ricci curvature in dimension $3$. Then the conclusion follows from the original pointwise version of the theorem.

In \cite{BC}, we used a Bochner method to extend a theorem of M.~Gursky in dimension four (\cite{Gu0}) to all dimensions greater than four: \textit{if $(M^{n},g)$ is a Riemannian manifold of dimension greater that four with positive Yamabe constant such that 
\begin{equation}
\left(\int_M\nnmr{\rstg}^{n/2}dv_g\right)^{4/n}\leq \frac{1}{n(n-1)} \Y(M^n,[g])^2,\label{eq:degre1compactnorm}
\end{equation}
then
\begin{itemize}
\item either its first Betti number $b_1(M^n)$ vanishes,
\item or equality is attained in \eqref{eq:degre1compactnorm}, $b_1(M^n)=1$, and there is an Einstain manifold $(N^{n-1},h)$ with positive scalar curvature such that $(M^{n},g)$ is isometric (or conformal in dimension four) to a quotient of the Riemannian product: $\R\times N^{n-1}$.
\end{itemize}}

The proof of the first part or the result is essentially the following: 
we first prove that the strict integral pinching implies that a certain Schr\"odinger operator $\square_g$ (see its definition in  section 
\ref{sec:bochner}) is positive, then we prove by a Bochner method that the positivity of the operator forces harmonic forms to vanish.  We rewrote it in this way in \sref{sec:bochner}.

However, we couldn't extent the result for three dimensional manifolds.  

In this article, we obtain similar results in dimension three, for instance with a pinching involving the operator norm of the Schouten tensor $\displaystyle A_g=\Ric_g-\frac 14\scal_g g$ :
\begin{equation*}
\nm{\nm{\nm{A_g}}}=\max_{v\in T_x M\setminus\{0\}} \frac{\nm{A_g(v,v)}}{\nm{v^2}},
\end{equation*}
and we deduce sphere theorems for three dimensional manifolds that satisfy integral pinchings.

For instance we obtain the following theorem : 
\begin{theo}
If $(M^3,g)$ is a closed Riemannian manifold whose Schouten tensor satisfies
\begin{equation*}
  \left(\int_M\nnnm{A_g}^{\frac32}dv_g\right)^{\frac{2}{3}}\le \frac{1}{4}Y(M,[g])
\end{equation*}
then 
\begin{itemize}
\item either $M$ carries a flat Riemannian metric.
\item or $M$ is diffeomorphic to a space form $M\simeq \bS^3/\Gamma $,
\item or $M$ is diffeomorphic to $\bS^1\times \bS^2$ or to $\bS^1\times \bP^2(\R)$ or to $\SO(3)\#\SO(3)$.
\end{itemize}
\end{theo}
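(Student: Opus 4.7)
The plan is to analyze the situation based on the sign of the Yamabe invariant $Y(M,[g])$.

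First, if $Y(M,[g])\leq 0$, the right-hand side of the pinching is non-positive and the left-hand side is non-negative, so both must vanish. This forces $A_g\equiv 0$; taking the trace of $\Ric_g=\tfrac14\scal_g\,g$ gives $\scal_g\equiv 0$, hence $\Ric_g\equiv 0$. In dimension three, Ricci-flat is flat, so $g$ is flat and the first alternative holds.

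Now assume $Y(M,[g])>0$. Adapting the scheme of \cite{BC} recalled in \sref{sec:bochner}, the integral pinching is designed so that a Schr\"odinger-type operator $\square_g$ acting on $1$-forms is non-negative, and strictly positive under strict pinching. A refined Bochner--Kato inequality then forces every harmonic $1$-form to vanish in the strict case, yielding $b_1(M)=0$. In the equality case, the same analysis produces a nontrivial parallel $1$-form, so by the de Rham decomposition $(M,g)$ is a Riemannian quotient of $\R\times N^2$; the pinching identifies $N^2$ with a round $\bS^2$ (or $\bP^2$), and the possible cocompact quotients recover the diffeomorphism types $\bS^1\times\bS^2$, $\bS^1\times\bP^2(\R)$, and $\SO(3)\#\SO(3)$ of the third alternative.

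In the strict case with $Y>0$ and $b_1(M)=0$, the manifold $M$ admits a metric of positive scalar curvature, so by the classification of closed $3$-manifolds with positive scalar curvature (Perelman, after Gromov--Lawson, Schoen--Yau and Hamilton) it is a connected sum of spherical space forms and (possibly twisted) $\bS^2$-bundles over $\bS^1$. The condition $b_1(M)=0$ rules out the bundle factors, so $M$ is a connected sum of spherical space forms.

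The main obstacle is the last topological reduction: showing this connected sum is trivial, i.e.\ $M\simeq\bS^3/\Gamma$. I would attack this either by a Kobayashi-type inequality comparing $Y(M_1\#M_2)$ to $Y(M_1)$ and $Y(M_2)$, used together with the pinching bound to force a single summand, or by a stable minimal $2$-sphere argument (Meeks--Simon--Yau) on a separating sphere of a nontrivial decomposition combined with the strict positivity of $\square_g$, to reach a contradiction.
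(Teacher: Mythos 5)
Your reduction to the Bochner operator and to the Perelman classification of closed $3$-manifolds with (non)positive scalar curvature matches the paper's strategy in outline (with two small inaccuracies: the operator $\square_g=\Delta_g+\tfrac12\rho_1$ acts on \emph{functions}, applied to $u=|\xi|^{1/2}$ for a harmonic $1$-form $\xi$; and the equality case yields a \emph{warped} product $(dt)^2+\eta^2(t)h$ via equality in the refined Kato inequality, not a parallel form). The genuine gap is exactly the step you flag as "the main obstacle", and neither of your proposed attacks is the right tool. The point is that $b_1(M)=0$ does \emph{not} rule out a nontrivial connected sum of spherical space forms: for $M=\bS^3/\Gamma_1\#\bS^3/\Gamma_2$ the group $\pi_1(M)=\Gamma_1*\Gamma_2$ has finite abelianization, so $b_1(M)=0$ while $M$ is far from being a single space form. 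A Kobayashi-type surgery inequality for Yamabe invariants goes the wrong way for this purpose (connected sums of space forms still admit conformal classes of large Yamabe constant, and the hypothesis constrains only the given class), and a stable minimal sphere in a PSC manifold carries no contradiction.

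What actually closes the gap in the paper is a \emph{virtual} first Betti number argument. First, nonnegativity of $\square_g$ passes to every finite cover (Lemma~\ref{lem:cover}, via the Moss--Piepenbrink/Fischer-Colbrie--Schoen principle), so Proposition~\ref{prop:bochner} applies on all finite normal covers: either every such cover has $b_1=0$, or $M$ is finitely covered by a twisted product $\bS^1\underline{\times}\,N^2$ (Corollary~\ref{cor:cover}). Second, Theorem~\ref{thm:b1} shows that any connected sum $X_1\# X_2$ with both factors non-simply-connected and residually finite fundamental group (free products of finite groups are residually finite, by Gruenberg) admits a finite normal cover with $b_1\ge 1$, by an intersection-number argument with a lift of the separating sphere. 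Combining the two, a decomposition with $r\ge2$ nontrivial summands or with an $\bS^1\times\bS^2$ handle forces the twisted-product alternative, which is precisely where $\bS^1\times\bS^2$, $\bS^1\times\bP^2(\R)$ and $\SO(3)\#\SO(3)$ arise; otherwise $M$ is flat or a single space form. Without this covering-space input your argument cannot distinguish $\bS^3/\Gamma$ from $\bS^3/\Gamma_1\#\bS^3/\Gamma_2$, so as written the proof is incomplete.
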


Alternative pinching results are presented in section \ref{sec:pinching}.  
We first prove that when the pinching holds, the Scrödinger operator $ \square_g$ is non negative. 
If $ \square_g$ is positive, it is also positive on all finite covers of the manifold, 
so  we obtain that the first Betti number of all finite covers of the manifold vanishes. 
In dimension three, this is in fact sufficient to caracterize the quotients of the sphere, according to the classification of closed Riemannian manifolds with nonnegative scalar curvature. 

Our proof use three main ingredients, the first and major one is the classification of closed Riemannian manifold with positive scalar curvature initiated by R. Schoen and S-T. Yau \cite{SY2}, M. Gromov and H-B. Lawson \cite{GLihes}, and achieved by the fundamental work of G. Perelman (\cite{P1,P2,P3}). The second one is the Bochner's type argument that we used in \cite{BC} (\sref{sec:bochner}) and the last one is  a topological observation about the virtual Betti number of connected sum (\sref{sec:topology}). 

In a certain extent, our argument is similar to the new proof of the conformal sphere theorem of A. Chang, M. Gursky and P. Yang found recently by
B-L. Chen and X-P. Zhu \cite{chenZhu2}. They used a modified version of the Yamabe invariant, invented by M. Gursky (\cite{Gu}), in order to apply their classification of closed $4$-manifold carrying a Riemannian metric with positive isotropic curvature (\cite{H3} and \cite{chenZhu1}).

\begin{merci}
We would like to thank F. Laudenbach and S. Tapie for helpful discussions. Moreover,
the authors are partially supported by the grants ACG: ANR-10-BLAN 0105 and GTO: ANR-12-BS01-0004.   
\end{merci}

\section{Topological considerations}\label{sec:topology}

\begin{thm}\label{thm:b1}
Let $M^n$ be a closed manifold of dimension $n\ge 3$ admitting a connected sum decomposition:
$$M=X_1\#X_2$$ where $X_1$ and $X_2$ are not simply connected and where $\pi_1(M)$ is residually finite, then 
$M$ has a finite normal covering $\widehat M\rightarrow M$ with positive first Betti number:
$$b_1(\widehat{M})\ge 1.$$
\end{thm}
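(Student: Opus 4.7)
The plan is to produce a finite normal cover $\widehat{M}\to M$ that contains a non-separating embedded $(n-1)$-sphere. Since $n\ge 3$, such a sphere has trivial normal bundle (any line bundle over the simply connected $S^{n-1}$ is trivial), so the classical surgery of removing its tubular neighborhood $S^{n-1}\times(-1,1)$ and capping off with two $n$-balls yields a connected sum decomposition $\widehat{M}\cong \widehat{M}'\#(S^1\times S^{n-1})$, and therefore $b_1(\widehat M)\ge 1$.

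First, by van Kampen and the simple connectedness of $S^{n-1}$, we have $\pi_1(M)=\pi_1(X_1)\ast\pi_1(X_2)$ with both free factors nontrivial. Choose $g_i\in\pi_1(X_i)\setminus\{1\}$; by residual finiteness of $\pi_1(M)$ there are finite-index normal subgroups $H_i$ of $\pi_1(M)$ with $g_i\notin H_i$. Set $H:=H_1\cap H_2$: it is normal of finite index $d:=[\pi_1(M):H]$, and by construction
\[
[\pi_1(X_i):H\cap\pi_1(X_i)]\ge 2, \qquad i=1,2.
\]

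Let $p:\widehat{M}\to M$ be the cover associated to $H$ and let $S\subset M$ denote the separating sphere used in the connected sum. Because $S$ is simply connected, $p^{-1}(S)$ consists of exactly $d$ embedded disjoint spheres. Cutting $\widehat{M}$ along them produces, for each $i$, a disjoint union of $a_i$ connected covers of $X_i\setminus B$, where by the standard component count for a normal cover
\[
a_i=[\pi_1(M):H\pi_1(X_i)]=\frac{d}{[\pi_1(X_i)H:H]}=\frac{d}{[\pi_1(X_i):H\cap\pi_1(X_i)]}\le \frac{d}{2}.
\]

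Now form the finite bipartite graph $\Gamma$ whose vertices are the $a_1+a_2$ cut pieces and whose edges are the $d$ spheres of $p^{-1}(S)$; $\Gamma$ is connected because $\widehat{M}$ is, and
\[
b_1(\Gamma)=d-(a_1+a_2)+1\ge d-\tfrac{d}{2}-\tfrac{d}{2}+1=1.
\]
Hence $\Gamma$ contains a cycle, so at least one sphere in $p^{-1}(S)$ is non-separating in $\widehat{M}$, and the surgery observation of the first paragraph concludes the proof. The main obstacle is the middle step: one must exploit residual finiteness efficiently to arrange a \emph{normal} cover in which each $\pi_1(X_i)$ has image of order at least $2$ in the deck group, for this is exactly what converts the double-coset count of components into the clean inequality $a_1+a_2\le d$ needed to force a cycle in $\Gamma$.
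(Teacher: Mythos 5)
Your proof is correct, and it reaches the conclusion by a genuinely different mechanism than the paper, even though both arguments exploit residual finiteness in exactly the same way (a finite-index normal subgroup $H$ missing one nontrivial element of each free factor $\pi_1(X_i)$). The paper works with a single explicit loop $\gamma=c_1\star c_2$: it lifts $\gamma$ to the cover, shows that the resulting closed curve meets one fixed lift $\widehat\Sigma$ of the separating sphere exactly once and transversally, and reads off a nonzero class in $H^1(\widehat M,\Z)$ from that intersection number. You instead make a global count: the preimage of $\Sigma$ has $d=[\pi_1(M):H]$ components (since $\Sigma$ is simply connected), while the cut-open cover has $a_1+a_2$ pieces with $a_i=[\pi_1(M):H\pi_1(X_i)]=d/[\pi_1(X_i):H\cap\pi_1(X_i)]\le d/2$ — the double-coset count collapsing to an index computation precisely because $H$ is normal — so the connected dual graph has first Betti number $d-(a_1+a_2)+1\ge 1$, forcing a non-separating lifted sphere and hence $b_1(\widehat M)\ge 1$. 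Each step checks out: the free-product decomposition and the injectivity of $\pi_1(X_i\setminus\bB^n)\to\pi_1(M)$ hold for $n\ge3$, the lifted spheres are two-sided since $S^{n-1}$ is simply connected, and an edge lying on a cycle of the dual graph does correspond to a sphere whose complement is connected. What your route buys is robustness: it avoids the paper's somewhat delicate analysis of the first return time of the lifted loop (where one must rule out the lift closing up "mid-word"), it visibly gives the stronger bound $b_1(\widehat M)\ge d-a_1-a_2+1$ (equivalently, $H_1(\widehat M)$ surjects onto $H_1$ of the dual graph), and it generalizes verbatim to connected sums with more summands. What the paper's route buys is an explicit $1$-cycle dual to an explicit lifted sphere, with no covering-space combinatorics. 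One cosmetic caveat: the surgery in your first paragraph produces $\widehat M'\#(S^1\times S^{n-1})$ or the twisted $S^{n-1}$-bundle over $S^1$ depending on the gluing, but both have first Betti number $1$, so the conclusion $b_1(\widehat M)\ge1$ is unaffected (or one can simply invoke the intersection-number homomorphism dual to the non-separating two-sided hypersurface).
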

\begin{rem}\label{rem:residually} We recall that a group $\pi$ is  residually finite if for any finite subset $A\subset \pi\setminus\{e\}$ there is a normal subgroup
$\Gamma\triangleleft \pi$  with finite index such that $$A\cap \Gamma=\emptyset.$$

A result\footnote{We are grateful to F. Laudenbach for these references.} of K.-W.  Gruenberg shows that a free product  $\pi=\Gamma_1*\Gamma_2$ of residually finite group, (both $\Gamma_j$ is residually finite) is residually finite (see \cite{Gr} or \cite{Magnus}).\end{rem}
\proof Let $\Sigma\subset M$ be an embedded $(n-1)$-sphere such that 
$$M\setminus \Sigma=(X_1\setminus \bB^n)\cup (X_2\setminus \bB^n)\,.$$
Let $p\in \Sigma$, we have 
$$\pi_1(M,p)=\pi_1(X_1,p)\star \pi_1(X_2,p).$$
We choose $c_i \colon [0,1]\rightarrow X_i\setminus \bB^n$ a non trivial loop based at $p$ such that
$c_i(0)=c_i(1)=p$, we can assume that 
$$t\in (0,1)\Rightarrow c_i(t)\not\in \Sigma.$$
We consider $[\gamma]=[c_1\star c_2]\in \pi_1(M,p)$
we have 
$$\gamma(t)=\begin{cases}c_1(2t)& \mathrm{if}\,\, t\in [0,1/2]\\
c_2(2t-1)& \mathrm{if}\,\, t\in [1/2,1]\\
\end{cases}$$
Because $\pi_1(M,p)$ is assumed to be residually finite, we can find a normal subgroup 
$$\Gamma\subset \pi_1(M,p)$$ of finite index not containing $[c_1]$ and  $[c_2]$. We consider the quotient of the universal cover $\widetilde{M}\rightarrow M$ by $\Gamma$:
$$\pi\colon\widehat{M}=\widetilde{M}/\Gamma\rightarrow M.$$
We consider a lift $\widehat \gamma : \R \to \widetilde{M}$ of the continuous path $t\mapsto\gamma\left(t\,\mathrm{mod}\, 1\right)$. That is to say :
$$\pi\left(\widehat\gamma(t)\right)=\gamma\left(t\,\mathrm{mod}\, 1\right).$$
We consider $\widehat{\Sigma}$ the lift of $\Sigma$ such that 
$$\widehat{\gamma}(0)\in \widehat{\Sigma}.$$
Let $\tau>0$ be the first positive time such that 
$$\widehat{\gamma}(\tau)=\widehat{\gamma}(0).$$
By construction, we know that $\tau\in \frac 12\N$. Because we have assumed that $[c_1]$ and $[c_2]$ are not in $\Gamma$, we have $\tau\ne \frac 12$. Hence for all $t\in\left(0,\tau\right)$,
$$\widehat{\gamma}(\tau)\notin\widehat{\Sigma}.$$
If we define $\widehat{\gamma}_{\mathrm red}\colon \R/\tau\Z\rightarrow \widehat{M} $ by 
$$\widehat{\gamma}_{\mathrm red}(t)=\widehat{\gamma}(t\mod \tau\Z)\ ,$$
then the intersection number between $\widehat{\gamma}_{\mathrm red}$ and $\widehat{\Sigma}$ is $\pm 1$. Hence
 $$H^1(\widehat{M},\Z)\ne\{0\}.$$
\endproof

\begin{rem} Using a deep result of W. L\"uck, 
we can give another, more analytical, proof. We can equipped $M$ with a Riemannian metric $g$. Let  
$\widetilde{M}\rightarrow M$ be the universal cover of $M$. According to the main result of the paper \cite{Luck}, we only need to show that the universal cover of $M$ carries some non trivial $L^2$ harmonic $1$-forms. But a lift of $\Sigma$ to $ \widetilde{M}$ separated $\widetilde{M}$ into two unbounded connected components (because $X_1$ and $X_2$ are non simply connected), hence $\widetilde{M}$ has at least two ends. Moreover the injectivity radius of $(\widetilde{M},g)$ is positive. According to Brooks \cite{brooks}, if $\pi_1(M)$ is non amenable, then the Laplace operator acting on functions on $\widetilde{M}$ has a spectral gap, hence by \cite[Proposition 5.1]{carronpedon}, $\widetilde{M}$ carries a non constant harmonic function $h$ with $L^2$ gradient $dh\in L^2$.  Hence the result holds when $\pi_1(M)$ is not amenable. But\footnote{We are grateful to S. Tapie for explaining this to us. See \cite[Lemma 2.28]{Moon} for a proof.} a non trivial free product $\pi_1(X_1,p)\star \pi_1(X_2,p)$ is amenable only for $\Z_2\star \Z_2$; as 
$\Z_2\star \Z_2$ contains a normal subgroup of index $2$ isomorphic to $\Z$, we see that in this remaining case, $M$ will have a two fold cover with first Betti number equals to $1$.
\end{rem}

\section{A Bochner result}\label{sec:bochner}

In this section, we prove  a Bochner result, which was almost contained in \cite{BC}: if the operator $\square_g=\Delta_g+\frac{n-2}{n-1}\rho_1$ is positive, the first Betti number must vanish, and the equality case is characterized.

\subsection{Preliminaries}
We consider a closed connected Riemannian manifold $(M^n,g)$ of dimension $n\ge 3$.

We denote by $\rho_1$ the lowest eigenvalue of the Ricci tensor of $g$. The function $\rho_1\colon M\rightarrow \R$ satisfies 
$$\forall x\in M,\ \forall v\in T_xM,\ \Ric_g(v,v)\ge \rho_1(x) g(v,v)\,.$$

Then, we denote by $\square_g$ the Schr\"odinger operator
$$\square_g:=\Delta_g+\frac{n-2}{n-1}\rho_1.$$

It is nonnegative if for any smooth function $\varphi$,
$$\int_M \left(\nm{\nabla\varphi}^2+\frac{n-2}{n-1}\rho_1\varphi^2\right) dv_g\geq 0,$$
or equivalently if its lowest eigenvalue is nonnegative.

\begin{lem}\label{lem:cover}
Let $(M^n, g)$ be a closed Riemannian manifold and let $\pi:\widehat M\to M$ be a cover of $M$.

If the operator $\square_g$ is nonnegative, then the operator $\square_{\pi^*g}$ is nonnegative.
\end{lem}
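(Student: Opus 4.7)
The plan is to run the classical ground-state (Agmon--Allegretto--Piepenbrink) argument: I would produce a positive supersolution of $\square_{\pi^*g}$ on $\widehat M$ by lifting a principal eigenfunction of $\square_g$ from $M$, and use it to factor an arbitrary compactly supported test function on $\widehat M$.

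First I would use that $\pi\colon(\widehat M,\pi^*g)\to(M,g)$ is a local isometry. The Ricci tensor therefore pulls back, $\Ric_{\pi^*g}=\pi^*\Ric_g$, so that its lowest eigenvalue satisfies $\rho_1(\pi^*g)=\rho_1(g)\circ\pi$, and the Laplacian commutes with pull-back of functions; consequently
$$\square_{\pi^*g}(f\circ\pi)=(\square_g f)\circ\pi\qquad\text{for every }f\in\mcC^\infty(M).$$
The serious point is that when the cover is infinite, $\widehat M$ is non-compact, so non-negativity of $\square_{\pi^*g}$ must be tested against arbitrary compactly supported functions on $\widehat M$, which cannot simply be pushed down to $M$ without destroying gradient information.

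Next, on the compact manifold $M$ the Schrödinger operator $\square_g$ has a bounded (in fact Lipschitz) potential $\tfrac{n-2}{n-1}\rho_1$. The standard Krein--Rutman argument applied to the resolvent $(\square_g+C)^{-1}$ for $C$ large produces a lowest eigenvalue $\lambda_1(\square_g)$ with a strictly positive associated eigenfunction $u$, so $\square_g u=\lambda_1 u$. The assumption $\square_g\geq 0$ gives $\lambda_1\geq 0$, hence $\widehat u:=u\circ\pi$ is a positive function on $\widehat M$ with $\square_{\pi^*g}\widehat u=\lambda_1\widehat u\geq 0$. For any $\widehat\varphi\in\mcC^\infty_c(\widehat M)$ I would then write $\widehat\varphi=\widehat u\,\widehat\psi$, expand $|\nabla(\widehat u\widehat\psi)|^2$ and integrate the cross term by parts (legitimate because $\widehat\psi$ has compact support) to obtain the ground-state identity
$$\int_{\widehat M}\!\left(|\nabla\widehat\varphi|^2+\tfrac{n-2}{n-1}(\rho_1\circ\pi)\widehat\varphi^2\right)dv_{\pi^*g}=\int_{\widehat M}\widehat u^2|\nabla\widehat\psi|^2\,dv_{\pi^*g}+\int_{\widehat M}\widehat\psi^2\,\widehat u\,(\square_{\pi^*g}\widehat u)\,dv_{\pi^*g},$$
whose right-hand side is non-negative, which proves the lemma.

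The only delicate step is the existence of a strictly positive principal eigenfunction $u$ on $M$, since $\rho_1$ is only Lipschitz; this is nevertheless standard for Schrödinger operators with $L^\infty$ potential on a compact manifold, and can alternatively be recovered by approximating $\rho_1$ from above by a smooth potential and passing to the limit. For a finite cover a more elementary averaging argument would also work, but the ground-state substitution above handles finite and infinite covers uniformly.
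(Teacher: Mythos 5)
Your argument is correct and is essentially the paper's proof: both lift a positive principal eigenfunction of $\square_g$ to $\widehat M$ and conclude nonnegativity of $\square_{\pi^*g}$ from the existence of this positive (super)solution. The only difference is that the paper cites the Moss--Piepenbrink / Fischer-Colbrie--Schoen criterion for this last step, whereas you reprove it directly via the ground-state substitution identity, which is a perfectly valid (and self-contained) way to do it.
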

\begin{proof}
  Let $u\in C^\infty( M)$ be an eigenfunction associated to the lowest eigenvalue of $\square_{g}$. We can suppose that $u$ is positive. 
Then the function $u\circ\pi$ is an positive eigenfunction for $\square_{\pi^*g}$. By a principle due to W.F. Moss and J. Piepenbrink 
and D. Fisher-Colbrie and R. Schoen, (\cite{MP,FCS} or \cite[lemma 3.10]{PRS}), 
we know that the bottom of the spectrum of  $\square_{\pi^*g}$ is non negative.

\end{proof}

\begin{lem}\label{lem:usquare}
Let $(M^n, g)$ be a closed Riemannian manifold. If $\xi$ is a non trivial harmonic one-form (i.e. $d\xi=0$ and $\delta\xi=0$), then the function $u=\nm{\xi}^{\frac{n-2}{n-1}}$ satisfies in the weak sense
\begin{equation*}
\square_g u +f_\xi u= 0
\end{equation*}
where
\begin{equation*}
f_\xi=\begin{cases}\frac{\Ric(\xi,\xi)}{\nm{\xi}^2}-\rho_1+\frac 1{\nm{\xi}^2}\left(\nm{\nabla\xi}^2-\frac n{n-1}\nm{d\nm{\xi}}^2 \right)&\text{where } \xi\neq 0\\
0&\text{where } \xi=0\end{cases}
\end{equation*}
is nonnegative on $M$.
\end{lem}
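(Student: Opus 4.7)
My plan is to derive the identity pointwise on the open set $U=\{x\in M:\xi_x\neq 0\}$ where $u$ is smooth, and then extend it to a distributional identity on all of $M$ by a regularization argument; the nonnegativity of $f_\xi$ will fall out of the definition of $\rho_1$ together with the refined Kato inequality for harmonic $1$-forms.

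The first step is the classical Bochner computation. Since $\xi$ is harmonic, the Weitzenböck formula gives $\nabla^{*}\nabla\xi=-\Ric(\xi)$; pairing with $\xi$ and combining with the identity $\Delta_g|\xi|^2=2|\xi|\,\Delta_g|\xi|-2|\nabla|\xi||^2$ (with $\Delta_g$ the nonnegative Laplacian, as dictated by the paper's sign convention on $\square_g$) yields, on $U$,
$$|\xi|\,\Delta_g|\xi|=|\nabla|\xi||^2-|\nabla\xi|^2-\Ric(\xi,\xi).$$
Next I apply the chain rule to $u=|\xi|^{\alpha}$ with $\alpha=\frac{n-2}{n-1}$, namely $\Delta_g u=\alpha|\xi|^{\alpha-1}\Delta_g|\xi|+\alpha(1-\alpha)|\xi|^{\alpha-2}|\nabla|\xi||^2$, substitute the Bochner identity above, and simplify. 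The value $\alpha=(n-2)/(n-1)$ is the one forced by $2-\alpha=\frac{n}{n-1}$, so that the coefficient of $|\nabla|\xi||^2$ produced by the chain rule is exactly the one present in $f_\xi$; after isolating the $\frac{n-2}{n-1}\rho_1\,u$ contribution, one reads off the required equation $\square_g u+f_\xi u=0$ on $U$.

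For the nonnegativity of $f_\xi$, I would split it into two manifestly nonnegative summands. The first, $\Ric(\xi,\xi)/|\xi|^2-\rho_1$, is $\geq 0$ directly from the definition of $\rho_1$ as the smallest eigenvalue of $\Ric_g$. The second, $\bigl(|\nabla\xi|^2-\frac{n}{n-1}|d|\xi||^2\bigr)/|\xi|^2$, is the refined Kato inequality for harmonic $1$-forms: at a point with $\xi\neq 0$, the tensor $\nabla\xi$ is symmetric (because $d\xi=0$) and trace-free (because $\delta\xi=0$), and choosing an orthonormal frame with $e_1$ aligned with $\xi/|\xi|$ reduces the inequality to the elementary bound $(\trace C)^2\leq (n-1)|C|^2$ on the symmetric block $C=(\nabla\xi)_{i,j\geq 2}$, which is just Cauchy--Schwarz applied to the trace of $C$.

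The main technical obstacle is to upgrade the pointwise identity on $U$ to a distributional identity on all of $M$, because $\alpha<1$ implies that $u$ is only Hölder continuous on the (possibly nonempty) zero set of $\xi$, and $f_\xi$ may blow up there. I would handle this by the standard regularization: introduce $u_\varepsilon=(|\xi|^2+\varepsilon^2)^{\alpha/2}$, which is smooth on $M$, repeat the Bochner/chain-rule computation on $u_\varepsilon$ to get an identity of the form $\square_g u_\varepsilon+g_\varepsilon=0$ with $g_\varepsilon$ smooth and converging pointwise to $f_\xi u$ off $\{\xi=0\}$, then pair with an arbitrary smooth test function and pass to the limit $\varepsilon\to 0$. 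The key point that makes the limit legitimate is that the singular factors of $1/|\xi|^2$ in $f_\xi$ are absorbed by the factor $|\xi|^{\alpha}=u$ in the product $f_\xi u$, so the product remains locally integrable and the limit identifies the claimed distributional equation.
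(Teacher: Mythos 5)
Your proposal is correct and follows essentially the same route as the paper: the Bochner formula for harmonic one-forms combined with the refined Kato inequality gives the pointwise identity where $\xi\neq 0$, and the regularization $u_\varepsilon=(\nm{\xi}^2+\varepsilon^2)^{\frac{n-2}{2(n-1)}}$ (exactly the function the paper uses, quoting the computation from \cite{BC}) upgrades it to a weak identity on all of $M$; your extra derivation of the refined Kato inequality from the symmetry and trace-freeness of $\nabla\xi$ is a welcome addition, since the paper only cites references for it. One small point: carrying out your chain-rule computation to the end actually yields $\square_g u+\tfrac{n-2}{n-1}f_\xi u=0$ rather than $\square_g u+f_\xi u=0$ (the paper's own displayed equation for $u_\varepsilon$ contains this factor, which then disappears from the stated conclusion), but since only the sign of $f_\xi$ is used downstream, the discrepancy is harmless.
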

\begin{proof}
The harmonic one-form $\xi$ satisfies both the Bochner equation
\begin{equation*}
\ps{\nabla^*\nabla\xi}{\xi}+\Ric(\xi,\xi)=0,
\end{equation*}
and the refined Kato inequality  (\cite[lemma 2]{yau}, \cite{branson}, \cite{kato})
\begin{equation*}
\frac n{n-1}\nm{d\nm{\xi}}^2\leq \nm{\nabla\xi}^2.
\end{equation*}
According to basic calculations (see \cite{BC}, section 6), the function $u_\epsilon=(\nm{\xi}^2+\ve^2)^{\frac{n-2}{2(n-1)}}$ satisfies
\begin{equation*}
\Delta_g u_\ve+\frac{n-2}{n-1}\left(\rho_1+f_\xi\right)\nm{\xi}^2u_\ve^{-\frac n{n-2}}=0
\end{equation*}
And since $\nm{\xi}^2u_\ve^{-\frac n{n-2}}=u\,.\left(\frac{\nm{\xi}^2}{\nm{\xi}^2+\ve^2}\right)^{\frac{n}{n-1}}\leq u$, the function $u$ satisfies 
\begin{equation*}
\square_g u+f_\xi u= 0,  
\end{equation*}
in the weak sense.
\end{proof}
  
Finally, a twisted product $\bS^1\underline{\times}\ N$ is the quotient of $\R\times N$ by the cyclic group generated by a diffeomorphism
$$(t,x)\mapsto \left(t+l,f(x))\right)$$
where $f\colon N\rightarrow N$ is a diffeomorphism of $N$ and $l$ is a positive number. If $f$ is isotopic to the identity map then this twisted product is diffeomorphic to $\bS^1\times N$.

\subsection{The Bochner result}

We can reformulate a part of the Bochner result in \cite{BC} as follows:
\begin{prop}\label{prop:bochner} Let $(M^n,g)$ be a closed Riemannian manifold of dimension $n\ge 3$. If the operator 
\begin{equation*}
\square_g=\Delta_g+\frac{n-2}{n-1}\rho_1
\end{equation*}
is non negative then
\begin{itemize}
\item either the first Betti number of $M$ vanishes : $\mathrm{b}_1(M^n)=0$,
\item or $M^n$ is isometric to a twisted product $\bS^1\underline{\times}\, N^{n-1}$ endowed with a warped product metric
$$(dt)^2+\eta^2(t) h$$
where $(N^{n-1},h)$ is a closed Riemannian manifold with non negative Ricci curvature. 
\end{itemize}
\end{prop}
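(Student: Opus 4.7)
The plan is to assume $b_1(M) \geq 1$, produce a nontrivial harmonic one-form, and show that the pointwise rigidity forced by the hypothesis yields the warped product structure.

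First I would take a non-trivial harmonic one-form $\xi$ on $(M^n,g)$ (which exists since $b_1(M)\geq 1$) and form the function $u = \nm{\xi}^{\frac{n-2}{n-1}} \geq 0$. By \lref{lem:usquare}, $u$ satisfies weakly
\begin{equation*}
  \square_g u + f_\xi u = 0,
\end{equation*}
where $f_\xi\ge 0$. The key analytic step is to test this equation against $u$ itself, obtaining
\begin{equation*}
  \int_M\left(\nm{\nabla u}^2 + \frac{n-2}{n-1}\rho_1 u^2\right)dv_g + \int_M f_\xi\, u^2\, dv_g = 0.
\end{equation*}
The first integral is non-negative because $\square_g\ge 0$ by hypothesis, and the second is non-negative because $f_\xi, u^2\ge 0$. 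So both integrals must vanish.

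From the vanishing of the second integral, $f_\xi\equiv 0$ wherever $u>0$, i.e.\ wherever $\xi\neq 0$. From the vanishing of the first, $u$ realizes the infimum of the Rayleigh quotient of $\square_g$, so $u$ is in fact a non-negative (distributional, then smooth by elliptic regularity on the open set where $\xi\ne 0$ and globally in the weak sense) eigenfunction of $\square_g$ with eigenvalue $0$. The strong maximum principle / Harnack inequality then forces either $u\equiv 0$ (ruled out since $\xi$ is non-trivial) or $u>0$ everywhere. Consequently $\xi$ is nowhere vanishing on $M$, and $f_\xi\equiv 0$ on all of $M$.

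The vanishing of $f_\xi$ gives, simultaneously, the two pointwise equalities
\begin{equation*}
  \Ric(\xi,\xi) = \rho_1 \nm{\xi}^2 \qquad\text{and}\qquad \nm{\nabla\xi}^2 = \frac{n}{n-1}\nm{d\nm{\xi}}^2.
\end{equation*}
The second is the equality case of the refined Kato inequality for a harmonic one-form. Combined with $d\xi=\delta\xi=0$, a standard linear-algebra argument (carried out in \cite{BC}) forces the covariant derivative $\nabla\xi$ to be of "radial" type: the dual vector field $\xi^\sharp$ is conformal on $M$, its flow is the gradient flow of a potential depending only on $\nm{\xi}$, and the orthogonal distribution $\ker\xi$ is integrable with totally umbilical leaves whose second fundamental form is a multiple of the induced metric. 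A de Rham–type splitting of the universal cover then yields a warped product metric $dt^2 + \eta^2(t) h$ on $\widetilde{M} = \R\times N^{n-1}$. Since $\xi$ is a nowhere vanishing closed one-form, Tischler's theorem produces a fibration $M\to\bS^1$, and pushing the warped product structure from the cover down to $M$ exhibits $M$ as a twisted product $\bS^1\underline{\times}\,N^{n-1}$ with the claimed metric. The non-negativity of the Ricci tensor of $(N^{n-1},h)$ follows from the warped product identities for the Ricci curvature together with $\rho_1 \ge 0$ (which itself is a consequence of $\square_g\ge 0$ after pairing with the constant function $1$, if $n\ge 3$).

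The main obstacle is the rigidity step: extracting the warped product structure from the equality case of the refined Kato inequality combined with the Ricci equality. The analytic input (existence of $u>0$ solving $\square_g u=0$) is straightforward once \lref{lem:usquare} is in hand, but the geometric classification requires the detailed equality-case analysis from \cite{BC}, which is what makes this step substantive rather than routine.
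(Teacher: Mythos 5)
Your argument follows the paper's route almost exactly up to the warped--product splitting: test the weak equation $\square_g u+f_\xi u=0$ against $u$, conclude $f_\xi\equiv 0$, hence equality in the refined Kato inequality and the fact that $\xi$ is an eigenvector for the lowest Ricci eigenvalue, and then invoke the equality-case analysis of \cite{BC} (the paper cites \cite[Proposition 5.1]{BC} applied to the normal cover associated to the kernel of the period map of an integral-period $\xi$; your Tischler-theorem phrasing is a cosmetic variant of the same step).

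The genuine gap is in the last step, where you claim that $\Ric_h\ge 0$ follows from ``$\rho_1\ge 0$, which itself is a consequence of $\square_g\ge 0$ after pairing with the constant function $1$.'' Pairing the quadratic form of $\square_g$ with $\varphi\equiv 1$ only gives
\begin{equation*}
\frac{n-2}{n-1}\int_M \rho_1\, dv_g\ \ge\ 0,
\end{equation*}
an integral inequality, not the pointwise bound $\rho_1\ge 0$. Indeed, the entire point of the paper is that $\square_g\ge 0$ is a strictly weaker, integral-type hypothesis than nonnegative Ricci curvature; if $\rho_1\ge 0$ were automatic, the proposition would reduce to the classical Bochner theorem and the rest of the paper would be vacuous. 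The correct argument, which you are missing, runs as follows: since $f_\xi=0$ forces $\xi$ (hence $dt$ on the cover) to be an eigenvector for the \emph{lowest} eigenvalue of $\Ric$, the explicit Ricci tensor of the warped product $(dt)^2+\eta^2(t)h$ yields $\Ric_h\ge -(n-2)\left(\eta''\eta-(\eta')^2\right)h$. If $\Ric_h$ had a nonpositive eigenvalue somewhere, this would force $(\ln\eta)''\ge 0$ everywhere; but $\eta$ is periodic (this is where the integral periods of $\xi$ are used), and a periodic convex function is constant, so $\eta$ is constant and $\Ric_h\ge 0$ after all. Without this periodicity-plus-convexity argument the nonnegativity of $\Ric_h$ does not follow.
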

\begin{proof}
We assume that the operator $\square_g$ is nonnegative. If $b_1(M)>0$, according to the deRham isomorphism and the Hodge theorem, we can find a non trivial harmonic one-form $\xi\in \mcC^\infty(T^*M)$ with integral periods:
$$d\xi=0,\ \delta\xi=0\quad\text{and}\quad\forall \gamma\in \pi_1(M)\,,\, \int_\gamma \xi\in \Z.$$
According to \lref{lem:usquare}, the function $u:=|\xi|^{\frac{n-2}{n-1}}$ satisfies in the weak sense
\begin{equation*}
\square_g u+f_\xi u = 0.
\end{equation*}
Integrating by parts, it follows that
\begin{equation*}
\int_M \left(\nm{\nabla u}^2+\frac{n-2}{n-1}\rho_1 u^2\right) dv_g \leq - \int_M f_\xi u^2 dv_g \leq 0.
\end{equation*}
But as $\square_g$ is nonnegative, we must have
\begin{equation*}
\int_M \left(\nm{\nabla u}^2+\frac{n-2}{n-1}\rho_1 u^2\right) dv_g= 0,
\end{equation*}
and therefore $f_\xi=0$. It implies that at any point $x$ where $\xi(x)\neq 0$,  equality is attained in the refined Kato inequality, and $\xi$ is an eigenvector for the lowest eigenvalue of the Ricci tensor.

As equality is attained in the refined Kato inequality, we know (see \cite[Proposition 5.1]{BC}) that the normal cover $(\widehat M,\widehat g)$  associated to the kernel of the morphism
\begin{equation*}
\begin{array}{ccc}
\pi_1(M) & \to & \Z \\
\gamma  & \mapsto &  \int_\gamma \xi,
 \end{array}
\end{equation*}
is isometric to a warped product
\begin{equation*}
(\R\times N^{n-1} ,(dt)^2+\eta^2(t) h),
\end{equation*}
where $(N^{n-1},h)$ is a closed Riemannian manifold. 

Moreover, still according to  \cite[Proposition 5.1]{BC}, the pullback $\pi^*\xi$ of $\xi$ on $\widehat M$ is a multiple of the derivative of the function
\begin{equation*}
\Phi(t,x)=\int_0^t \frac{dr}{\eta(r)^{n-1}}.
\end{equation*}

Therefore, $\pi^*\xi$ is a multiple of $\frac{dt}{\eta(t)^{n-1}}$, and it implies that $dt$ is an eigenvector associated to the lowest eigenvalue of the Ricci tensor.

Then, we write the Ricci tensor of the warped product metric $(dt)^2+\eta^2(t) h$: 
\begin{equation*}
\Ric=\left(\begin{array}{cc} 
  -(n-1)\frac{\eta''}{\eta}&0\\
  0&\frac{\Ric_{h}}{\eta^2} -\frac{(n-2)(\eta')^2+\eta''\eta}{\eta^2}\Id
\end{array}\right).
\end{equation*}

Since dt is an eigenvalue associated to the lowest eigenvalue of the Ricci tensor, we must have:
$$\Ric_{h}\ge -(n-2)\left(\eta''\eta-(\eta')^2\right)h.$$

If the Ricci tensor of $h$ has a nonpositive  eigenvalue, then $\eta''\eta-(\eta')^2=\eta^2\left(\ln(\eta)\right)''$ must be nonnegative, and the function $\ln(\eta)$ is convex.

But the function $\eta$ is $\ell$-periodic, where $\ell\Z$ is the range of the morphism 
\begin{equation*}
\begin{array}{ccc}
\pi_1(M) & \to & \Z \\
\gamma  & \mapsto &  \int_\gamma \xi.
 \end{array}
\end{equation*}
Therefore, either $Ric_h$ is positive, or $Ric_h$ is nonnegative and $\eta$ is constant.
\end{proof}
According to Lemma~\ref{lem:cover}, we obtain the following corollary:
\begin{cor}\label{cor:cover} Let $(M^n,g)$ be a closed Riemannian manifold of dimension $n\ge 3$. If  the  operator 
$$\square_g:=\Delta_g+\frac{n-2}{n-1}\rho_1$$
is non negative then
\begin{itemize} 
\item either the first Betti number of any finite normal cover of $M$ vanishes.
\item or $M$ has a finite cover diffeomorphic to a twisted product $\bS^1\underline{\times}\ N^{n-1}$, where $N^{n-1}$ carries a metric of nonnegative Ricci curvature.
\end{itemize}
\end{cor}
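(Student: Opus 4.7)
The plan is to deduce this corollary directly by applying Proposition~\ref{prop:bochner} to each finite normal cover, using Lemma~\ref{lem:cover} to transfer the nonnegativity hypothesis from $M$ up to the cover. Concretely, suppose $\square_g$ is nonnegative on $(M^n,g)$ and let $\pi\colon\widehat{M}\to M$ be any finite normal cover, equipped with the pulled-back metric $\pi^*g$. By Lemma~\ref{lem:cover}, the operator $\square_{\pi^*g}=\Delta_{\pi^*g}+\frac{n-2}{n-1}\rho_1(\pi^*g)$ is also nonnegative on $\widehat{M}$ (note that the lowest eigenvalue $\rho_1$ of the Ricci tensor pulls back correctly since $\pi$ is a local isometry).

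Next, I would apply Proposition~\ref{prop:bochner} to $(\widehat{M},\pi^*g)$. The dichotomy is: either $b_1(\widehat{M})=0$, or $\widehat{M}$ is isometric to a twisted product $\bS^1\underline{\times}\,N^{n-1}$ with warped product metric $(dt)^2+\eta^2(t)h$ for some closed Riemannian manifold $(N^{n-1},h)$ whose Ricci tensor is nonnegative. I would then split into cases depending on whether the first alternative holds for \emph{every} finite normal cover. If yes, we land in the first bullet of the corollary. If some finite normal cover $\widehat{M}$ instead falls into the second alternative of the proposition, then $\widehat{M}$ is diffeomorphic to a twisted product $\bS^1\underline{\times}\,N^{n-1}$ with $N$ carrying a metric of nonnegative Ricci curvature, which is exactly the second bullet.

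There is no real obstacle here, since this is essentially a packaging of the two previous results; the only point worth being careful about is matching the hypotheses of Proposition~\ref{prop:bochner}. In particular, the proposition is stated for closed manifolds, and a finite normal cover of a closed manifold is again closed, so it applies. Likewise, Lemma~\ref{lem:cover} is stated for arbitrary covers, so its use on a finite normal cover is unproblematic. Thus the proof is a two-line application: nonnegativity of $\square$ lifts to $\widehat{M}$ by Lemma~\ref{lem:cover}, and the structure theorem from Proposition~\ref{prop:bochner} on $\widehat{M}$ yields exactly the stated dichotomy.
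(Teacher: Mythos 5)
Your proof is correct and is exactly the argument the paper intends: the corollary is stated there without proof as an immediate consequence of Lemma~\ref{lem:cover} (nonnegativity of $\square$ lifts to covers) combined with Proposition~\ref{prop:bochner} applied to each finite normal cover. Nothing further is needed.
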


\section{A sphere theorem  for three-dimensional manifolds}

In this section, we show how to obtain a sphere theorem for integral pinched manifolds based on the classification of three-dimensional manifolds with nonnegative scalar curvature.

The inequality $\scal_g\ge n\rho_1$ is always true. Therefore, if the operator $\square_g$ is nonnegative then the 
operator
$$ \Delta_g+\frac{n-2}{n(n-1)}\scal_g$$
is also nonnegative. In dimension $3$ and $4$, it implies that the Yamabe operator
$$ \Delta_g+\frac{n-2}{4(n-1)}\scal_g$$
is nonnegative. 

As a result, if $\square_g$ is nonnegative on a compact manifold of dimension $3$ or $4$, we can find a metric $\tilde g=u^{\frac 4{n-2}}g$ conformal to $g$ which has nonnegative scalar curvature.

But closed three dimensional manifolds carrying a metric with nonnegative scalar curvature have been classified. After the results of R. Schoen and S-T. Yau \cite{SY2} or M. Gromov and H-B. Lawson \cite{GLihes}, this classification is a consequence of the solution of the Poincar\'e conjecture by G. Perelman (\cite{P1,P2,P3}). The Ricci flow with surgeries of Perelman also provides a direct proof of this classification.

If $M^3$ is a closed oriented three dimensional manifold which carry a metric of nonnegative scalar curvature, then either $M^3$ carries a flat metric or it admits a connected sum decomposition
$$M^3=S_1\#S_2\#\dots\#S_r\#\ell\left(\bS^1\times \bS^2 \right)\,\,,$$
where each $S_j$ with $j\geq 2$ has a non trivial finite fundamental group $\Gamma_j$ and is diffeomorphic to a lens space $S_j\simeq \bS^3/\Gamma_j$.

According to this, we can prove the following theorem:
\begin{thm}\label{thm:sphere}
If $(M^3,g)$ is a closed manifold such that the operator
$$\square_g=\Delta_g+\frac{1}{2}\rho_1$$
is non negative, then
\begin{itemize}
\item either $M$ carries a flat Riemannian metric.
\item or $M$ is diffeomorphic to a space form $M\simeq \bS^3/\Gamma $,
\item or $M$ is diffeomorphic to $\bS^1\times \bS^2$ or to $\bS^1\times \bP^2(\R)$ or to $\SO(3)\#\SO(3)$.
\end{itemize}
\end{thm}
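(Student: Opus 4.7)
The plan is to combine the hypothesis $\square_g \geq 0$ with Perelman's classification of closed three-manifolds of nonnegative scalar curvature to obtain a short list of possible topologies, and then to use \tref{thm:b1} together with \pref{prop:bochner} to pin down each case.

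As explained just before the theorem, the pointwise inequality $\scal_g \ge 3\rho_1$ combined with $\square_g \ge 0$ forces the Yamabe operator of $g$ to be nonnegative in dimension three, so there is a metric $\tilde g \in [g]$ with $\scal_{\tilde g} \ge 0$. Applying the Schoen--Yau / Gromov--Lawson / Perelman classification to $(M, \tilde g)$ (passing to the orientation double cover if necessary), either $M$ admits a flat metric, giving the first case, or
\[
M = S_1 \# \cdots \# S_r \# \ell (\bS^1 \times \bS^2),
\]
where each $S_j \simeq \bS^3/\Gamma_j$ with $\Gamma_j$ a nontrivial finite group. If $r + \ell = 1$, the manifold is either a space form (case 2) or $\bS^1 \times \bS^2$ (case 3); in the non-orientable setting the only prime pieces sharing these oriented covers and satisfying our hypothesis contribute $\bS^1 \times \bP^2(\R)$ to case 3.

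When $r + \ell \ge 2$, every prime summand has nontrivial fundamental group, and $\pi_1(M)$ is residually finite by Hempel/Perelman. Applying \tref{thm:b1} to a two-term decomposition $M = X_1 \# X_2$ yields a finite normal cover $\widehat M \to M$ with $b_1(\widehat M) \ge 1$. By \lref{lem:cover}, $\square_{\pi^*g}$ is nonnegative on $\widehat M$, so \pref{prop:bochner} forces $\widehat M$ to be isometric to a twisted warped product $\bS^1 \underline{\times} N^2$ with metric $dt^2 + \eta^2(t) h$ for a closed surface $(N^2, h)$ of nonnegative Ricci curvature.

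The final step is the topological bookkeeping, which I expect to be the main obstacle. The surface $N^2$ has nonnegative Gauss curvature, hence is $\bS^2$, $\bP^2(\R)$, $\bT^2$, or the Klein bottle. If $N^2$ is flat, the Ricci computation at the end of the proof of \pref{prop:bochner} forces $\eta$ to be constant, so $\widehat M$ is flat; Bieberbach's theorem then lets $M$ itself carry a flat metric, contradicting the nontrivial prime decomposition. Hence $N^2 \in \{\bS^2, \bP^2(\R)\}$, and $\pi_1(\widehat M)$ is virtually $\Z$. Since $\widehat M \to M$ is finite, $\pi_1(M)$ is also virtually $\Z$, hence two-ended. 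Written as the free product $\Gamma_1 \ast \cdots \ast \Gamma_r \ast F_\ell$ with $r + \ell \ge 2$, the only virtually-$\Z$ possibility is the infinite dihedral group $\Z/2 \ast \Z/2$, which forces $r = 2$, $\ell = 0$, $\Gamma_1 = \Gamma_2 = \Z/2$, and thus $M \simeq \bP^3(\R) \# \bP^3(\R) = \SO(3) \# \SO(3)$. A parallel descent from the orientation cover, combined with the enumeration of free orientation-reversing involutions on the already-identified orientable models, produces no new manifolds beyond $\bS^1 \times \bP^2(\R)$ in case 3.
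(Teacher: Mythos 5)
Your proposal is correct and rests on the same three pillars as the paper's proof: the Schoen--Yau/Gromov--Lawson/Perelman classification of closed three-manifolds with nonnegative scalar curvature, Theorem~\ref{thm:b1} on virtual first Betti numbers of connected sums, and the Bochner rigidity of Proposition~\ref{prop:bochner} transported to covers via Lemma~\ref{lem:cover}. The endgame is organized differently: the paper splits into cases according to whether some finite normal cover of the oriented cover has positive first Betti number, identifies the rigid case as a quotient of $\bS^1\times\bS^2$ through the warped-product structure, and then enumerates those quotients; you instead split on the number $r+\ell$ of prime summands and, when $r+\ell\ge 2$, use the twisted-product conclusion to see that $\pi_1(M)$ is two-ended, so that the free product $\Gamma_1\ast\cdots\ast\Gamma_r\ast F_\ell$ must be $\Z/2\ast\Z/2$, which yields $\SO(3)\#\SO(3)$ directly from the prime decomposition. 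This group-theoretic identification is a clean, self-contained substitute for the paper's enumeration of quotients. Two small points. First, in the subcase where $N^2$ is flat you should not invoke Bieberbach to put a flat metric on $M$ itself (being finitely covered by a flat manifold does not immediately produce one); it suffices to observe that $\pi_1(M)$ would then be virtually $\Z^3$, hence one-ended, contradicting its splitting as a nontrivial free product with $r+\ell\ge 2$. Second, the non-orientable descent is the thinnest step in both your write-up and the paper's: the mapping torus of the antipodal map of $\bS^2$ is a free $\Z/2$-quotient of $\bS^1\times\bS^2$ carrying a locally product metric with $\rho_1=0$, hence $\square_g=\Delta_g\ge 0$, and it is not diffeomorphic to any manifold on the list (its fundamental group is $\Z$, not $\Z\times\Z/2$); so the claim that the descent ``produces no new manifolds beyond $\bS^1\times\bP^2(\R)$'' is not quite right --- but this omission is shared by the paper's own enumeration of the quotients of $\bS^2\times\bS^1$ and concerns the statement rather than your strategy.
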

\proof
We consider the oriented cover $\bar M^3$  of $M^3$.
 
Since $\square_g$ is nonnegative,  either $\bar M^3$ carries a flat metric or it admits a connected sum decomposition
$$\bar M^3=S_1\#S_2\#\dots\#S_r\#\ell\left(\bS^1\times \bS^2 \right)\,\,,$$
where each $S_j$ with $j\geq 2$  has a non trivial finite fundamental group $\Gamma_j$ and is diffeomorphic to a lens space $S_j\simeq \bS^3/\Gamma_j$.

Using \tref{thm:b1} and \rref{rem:residually}, if the first cohomology group of all finite normal covers of $\bar M^3$ vanishes, then either $\bar M^3$ carries a flat metric,  or $(\ell,r)=(0,1)$ and $M^3$ is a space form, or $(\ell,r)=(1,0)$ and $M^3$ is a quotient of $\bS^2\times\bS^1$.

Then, according to \cref{cor:cover}, if there exists a finite normal cover of $\bar M^3$ with positive first Betti number, $\bar M^3$ has a finite cover diffeomorphic to $N^2\underline{\times}\ \bS^1$, where $N^2$ carries a metric of positive Ricci curvature. 

Therefore, either $N^2$ carries a flat metric and $M^3$ also carries a flat metric, or $N^2$ is a quotient of $\bS^2$. In the second case, since $\bar M^3$ is oriented, $N^2$ is diffeomeorphic to $\bS^2$, and the diffeomorphism $f:\bS^2\to\bS^2$ defining the twisted product  $N^2\underline{\times}\ \bS^1$ preserves orientation, hence is homotopic to the identity map. It follows that $M^3$ is diffeomorphic to a quotient of $\bS^2\times\bS^1$. 
 
Finally, the conclusion follows from the observation that the quotients of $\bS^2\times\bS^1$ are diffeomorphic to $\bS^1\times \bS^2$, $\bS^1\times \bP^2(\R)$ or $\SO(3)\#\SO(3)$.
\endproof

\section{Conditions for the operator $\square_g$ to be nonnegative}\label{sec:pinching}

Finally, we give several integral pinching conditions under which the operator $\square_g$ is nonnegative and deduce the sphere theorem implied by \tref{thm:sphere}. 

\subsection{With a pinching involving the Schouten tensor} 

In dimension $3$, the Riemann curvature tensor of a Riemannian metric $g$ has the following decomposition:
\begin{equation*}
\Rm_g=A_g\wedge g,
\end{equation*}
where $\wedge$ is the Kulkarni-Nomizu product and $A_g$ is the Schouten tensor, which satisfies 
$$A_g=\rstg+\frac 1{12}\scal_gg=\Ric_g-\frac 14\scal_g g.$$

We denote by $a_1\le a_2\le a_3$ the eigenvalues of the Schouten tensor. Then
$$\rho_1=a_1+\frac14\scal_g$$ 
and therefore
$$\square_g=\Delta_g+\frac18\scal_g+\frac12 a_1=\frac18 L_g+\frac12 a_1,$$
where $L_g=\Delta_g+\frac 18\scal_g$ is the Yamabe operator.

We recall that the Yamabe constant $Y(M,[g])$ is the best possible constant in the Sobolev inequality:
\begin{equation}\label{SobolevYamabe}\forall \vp\in\mcC^\infty(M),\quad Y(M,[g])\,\|\vp\|_{L^6}^2\le \int_M\left(8|d\vp|^2+\scal_g \vp^2\right)dv_g.
\end{equation}

Another interpretation of the Yamabe constant is given in more geometrical terms:
\begin{equation*}
  Y(M,[g])=\inf_{\substack{ \tilde g=e^{f}g\\ f\in C^\infty(M)}} \vol(M,\tilde g)^{-\frac13}\int_M \scal_{\tilde g}dv_{\tilde g}.
\end{equation*}

We define $(a_{1})_-=\max\{-a_1,0\}$ and the operator norm $\nnnm{A_g}$ of the Schouten tensor by:
$$\nnnm{A_g}(x)=\max_{v\in T_xM\setminus\{0\}} \frac{\left|g(A_gv,v)\right|}{g(v,v)}$$

The following inequality holds:
$$(a_{1})_-\leq \nnnm{A_g}$$
and equality is attained if and only if $a_1+a_3\le 0$.

\begin{prop}
If $(M^3,g)$ is a closed Riemannian manifold whose Schouten tensor satisfies
\begin{equation*}
\nnm{(a_1)_-}_{L^{\frac32}}\le \frac{1}{4}Y(M,[g])\qquad\text{or}\qquad\nnnm{A_g}_{L^{\frac32}}\le \frac{1}{4}Y(M,[g])
\end{equation*}
then the operator $\square_g$ is nonnegative. 
\end{prop}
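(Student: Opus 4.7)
The plan is to use the identity $\square_g = \tfrac{1}{8}L_g + \tfrac{1}{2}a_1$ established just before the statement, together with the Sobolev--Yamabe inequality \eqref{SobolevYamabe} and Hölder's inequality, to estimate the quadratic form of $\square_g$ directly. Since $\square_g$ is a second-order Schrödinger operator, its nonnegativity is equivalent to the nonnegativity of $\int_M (|\nabla\varphi|^2 + \tfrac{1}{2}\rho_1\varphi^2)\,dv_g$ for every $\varphi \in \mcC^\infty(M)$, which we can rewrite as
\begin{equation*}
\int_M\left(|\nabla\varphi|^2 + \tfrac{1}{8}\scal_g\varphi^2\right)dv_g + \tfrac{1}{2}\int_M a_1\varphi^2\,dv_g.
\end{equation*}

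For the first block I would apply the Sobolev--Yamabe inequality \eqref{SobolevYamabe} (after the harmless division by $8$), which gives a lower bound of $\tfrac{1}{8}Y(M,[g])\,\|\varphi\|_{L^6}^2$. For the second block I would use $a_1 \ge -(a_1)_-$ together with Hölder's inequality with conjugate exponents $3/2$ and $3$:
\begin{equation*}
\int_M a_1\,\varphi^2\,dv_g \ge -\int_M (a_1)_-\,\varphi^2\,dv_g \ge -\nnm{(a_1)_-}_{L^{3/2}}\,\|\varphi\|_{L^6}^2.
\end{equation*}
Putting both estimates together yields
\begin{equation*}
\int_M\left(|\nabla\varphi|^2 + \tfrac{1}{2}\rho_1\varphi^2\right)dv_g \ge \left(\tfrac{1}{8}Y(M,[g]) - \tfrac{1}{2}\nnm{(a_1)_-}_{L^{3/2}}\right)\|\varphi\|_{L^6}^2,
\end{equation*}
and the right-hand side is nonnegative precisely under the first pinching hypothesis $\nnm{(a_1)_-}_{L^{3/2}} \le \tfrac{1}{4}Y(M,[g])$.

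For the second hypothesis, I would simply note the pointwise inequality $(a_1)_- \le \nnnm{A_g}$ that is mentioned in the paragraph just before the proposition, so $\nnm{(a_1)_-}_{L^{3/2}} \le \nnm{\nnnm{A_g}}_{L^{3/2}}$ and we are reduced to the first case. There is no real obstacle here: the only things to check are that the Sobolev--Yamabe inequality applies unconditionally (it does, as the proposition implicitly forces $Y(M,[g]) \ge 0$, since otherwise the hypothesis would be vacuous) and that the constants line up, which they do because the $L^{3/2}$--$L^3$ duality is perfectly matched to $n=3$ and to the $L^6$ constant on the right-hand side of \eqref{SobolevYamabe}.
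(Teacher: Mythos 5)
Your proof is correct and is essentially the paper's own argument: the authors also split $\tfrac12\rho_1=\tfrac18\scal_g+\tfrac12 a_1$ (they just multiply the quadratic form by $8$ rather than dividing the Yamabe inequality), bound $a_1\ge -(a_1)_-$, and apply \eqref{SobolevYamabe} together with H\"older with exponents $3/2$ and $3$; the reduction of the second hypothesis to the first via $(a_1)_-\le\nnnm{A_g}$ is likewise what the paper intends. No gaps.
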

\begin{proof}
 Since $\rho_1=a_1+\frac14\scal_g$, we get from the Sobolev's type inequality (\ref{SobolevYamabe}) and the H\"older inequality 
  \begin{equation}\label{eq:schoutenIneq}\begin{split}
    8\int_M\left( \nm{d \vp}^2+\frac 12 \rho_1 \vp^2\right) dv_g &\ge\int_M \left(8|d\vp|^2+\scal_g \vp^2-4\left((a_{1})_-\right) \vp^2\right)dv_g\\
    &\ge \left( Y(M,[g])-4\nnm{(a_1)_-}_{L^{\frac32}}\right)\, \nnm{\vp}_{L^6}^2,
  \end{split}\end{equation} 
\end{proof}

According to \tref{thm:sphere}, we obtain

\begin{thm}
If $(M^3,g)$ is a closed Riemannian manifold whose Schouten tensor satisfies
\begin{equation*}
\nnm{(a_1)_-}_{L^{\frac32}}\le \frac{1}{4}Y(M,[g])\qquad\text{or}\qquad\nnnm{A_g}_{L^{\frac32}}\le \frac{1}{4}Y(M,[g])
\end{equation*}
then 
\begin{itemize}
\item either $M$ carries a flat Riemannian metric.
\item or $M$ is diffeomorphic to a space form $M\simeq \bS^3/\Gamma $,
\item or $M$ is diffeomorphic to $\bS^1\times \bS^2$ or to $\bS^1\times \bP^2(\R)$ or to $\SO(3)\#\SO(3)$.
\end{itemize}
\end{thm}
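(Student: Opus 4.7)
The plan is simply to combine the proposition just established with \tref{thm:sphere}. Since the inequality $(a_1)_-\le \nnnm{A_g}$ holds pointwise, the hypothesis $\nnnm{A_g}_{L^{3/2}}\le \frac14 Y(M,[g])$ implies $\nnm{(a_1)_-}_{L^{3/2}}\le \frac14 Y(M,[g])$, so it suffices to handle the first pinching. Under that hypothesis, the previous proposition tells us that $\square_g=\Delta_g+\frac12\rho_1$ is nonnegative on $(M^3,g)$.

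Once nonnegativity of $\square_g$ is in hand, the conclusion is exactly the statement of \tref{thm:sphere}: either $M$ admits a flat metric, or $M\simeq \bS^3/\Gamma$ is a space form, or $M$ is diffeomorphic to $\bS^1\times\bS^2$, $\bS^1\times\bP^2(\R)$, or $\SO(3)\#\SO(3)$. So the proof is essentially a two-line invocation: \emph{apply the proposition, then apply \tref{thm:sphere}}.

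There is no real obstacle here, since all the analytic and topological work has been done in the previous sections. The only thing worth flagging explicitly for the reader is the pointwise comparison $(a_1)_-\le\nnnm{A_g}$ (together with the remark made earlier that equality holds precisely when $a_1+a_3\le 0$), which explains why the sharper-looking bound on $(a_1)_-$ is implied by the operator-norm bound on the full Schouten tensor, and why stating the result in both forms is not redundant but gives two genuinely different sufficient conditions.
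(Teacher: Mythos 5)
Your proposal is correct and is exactly what the paper does: the theorem is obtained by combining the preceding proposition (which shows either pinching hypothesis forces $\square_g=\Delta_g+\frac12\rho_1$ to be nonnegative) with \tref{thm:sphere}. Your additional observation that the pointwise inequality $(a_1)_-\le\nnnm{A_g}$ reduces the second hypothesis to the first is consistent with the paper's own remark and is a harmless (indeed clarifying) extra.
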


\begin{rem}\label{rem:3case}
  The third case corresponds to the equality case in \pref{prop:bochner}. Moreover, equality must also be attained in \eqref{eq:schoutenIneq}, therefore the function $u=\sqrt{\nm{\xi}}$ must satisfy the Yamabe equation $8 \Delta_g u+ \scal_g u =Y(M,[g]) u^5$.
  
It follows that in this case, the Riemannian metric $g$ lifts to a metric $$(dt)^2+\eta^2(t)h$$ on $\bS^1\times \bS^2$, where $h$ is the round metric of the sphere $\bS^2$ and  the function $\eta^{-1}$ is a Yamabe minimizer.
\end{rem}

\subsection{With a pinching involving the traceless Ricci tensor}
Let $r_1$ be the lowest eigenvalue of the traceless Ricci tensor defined by
$$\rstg:=\Ric_g-\frac13\scal_g g.$$
We always have
$$r_1^2\le \frac23 \|\rstg\|^2$$ where 
$ \|\rstg\|$ is the Hilbert-Schmidt norm of the traceless Ricci tensor, and equality occurs only when
the spectrum of $\rstg$ is $r_1$ and $-\frac{r_1}{2}$ with multiplicity two.

Then we have 
$$\square_g=\Delta_g+\frac16 \scal_g+\frac12 r_1.$$

We introduce the lowest eigenvalue $\mu(g)$ of the operator 
$$4\Delta_g+\scal_g\,.$$
This quantity has the remarkable property of being nonincreasing along the Ricci flow (see \cite{P1}). 
\begin{prop}
If $(M^3,g)$ is a closed Riemannian manifold whose traceless Ricci tensor satisfies
\begin{equation*}
    \|r_{1}\|_{L^{3}}\le \frac{1}{3}\sqrt{Y(M,[g])\, \mu(g)}\qquad\text{or}\qquad \| \rstg\, \|_{L^{3}}\le \frac{1}{\sqrt{6}}\sqrt{Y(M,[g])\, \mu(g)},
\end{equation*}
then the operator $\square_g$ is nonnegative. 
\end{prop}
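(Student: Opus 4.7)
The plan is to decompose the quadratic form of $\square_g$ as a positive linear combination of the Yamabe and Perelman quadratic forms, and then absorb the contribution of $r_1$ via a three-factor H\"older estimate followed by AM--GM. Since $\rho_1 = r_1 + \frac{1}{3}\scal_g$, proving $\square_g \ge 0$ amounts to showing, for every $\varphi\in \mathcal{C}^\infty(M)$,
\begin{equation*}
\int_M\left(|d\varphi|^2 + \frac{1}{6}\scal_g\,\varphi^2\right)dv_g + \frac{1}{2}\int_M r_1\,\varphi^2\,dv_g \ge 0.
\end{equation*}

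I would first look for $\alpha, \beta > 0$ such that $\alpha(8|d\varphi|^2 + \scal_g\varphi^2) + \beta(4|d\varphi|^2 + \scal_g\varphi^2) = |d\varphi|^2 + \frac{1}{6}\scal_g\varphi^2$; the linear system $8\alpha + 4\beta = 1$, $\alpha + \beta = 1/6$ forces $\alpha = \beta = 1/12$. The Sobolev--Yamabe inequality (\ref{SobolevYamabe}) and the variational characterization of $\mu(g)$ then give
\begin{equation*}
\int_M\left(|d\varphi|^2 + \frac{1}{6}\scal_g\,\varphi^2\right)dv_g \ge \frac{Y(M,[g])}{12}\|\varphi\|_{L^6}^2 + \frac{\mu(g)}{12}\|\varphi\|_{L^2}^2.
\end{equation*}

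For the $r_1$ term, H\"older with exponents $(3,6,2)$ applied to $|r_1|\cdot\varphi\cdot\varphi$ yields
\begin{equation*}
\frac{1}{2}\int_M |r_1|\varphi^2\,dv_g \le \frac{1}{2}\|r_1\|_{L^3}\|\varphi\|_{L^6}\|\varphi\|_{L^2}.
\end{equation*}
Setting $a = \|\varphi\|_{L^6}$ and $b = \|\varphi\|_{L^2}$, AM--GM gives $\frac{Y}{12}a^2 + \frac{\mu}{12}b^2 \ge \frac{\sqrt{Y\mu}}{6}ab$, which dominates the upper bound precisely when $\|r_1\|_{L^3} \le \frac{1}{3}\sqrt{Y(M,[g])\,\mu(g)}$, yielding the first statement. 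The $\rstg$ version then follows from the pointwise inequality $r_1^2 \le \frac{2}{3}\|\rstg\|^2$ recalled in the paper (a short eigenvalue computation using $r_2 + r_3 = -r_1$ and minimizing $r_2^2+r_3^2$), which gives $\|r_1\|_{L^3} \le \sqrt{2/3}\,\|\rstg\|_{L^3}$ and hence the threshold $\frac{1}{\sqrt{6}}\sqrt{Y\mu}$.

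No genuine obstacle arises; the argument reduces to identifying the right splitting of the Yamabe-like quadratic form, and the critical coefficient $\frac{1}{6}$ (lying strictly between the Yamabe $\frac{1}{8}$ and the Perelman $\frac{1}{4}$) is exactly what makes the two contributions combine sharply through AM--GM to give the optimal constants stated.
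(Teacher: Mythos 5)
Your proof is correct and follows essentially the same route as the paper's: the paper likewise combines the Yamabe inequality with the variational characterization of $\mu(g)$ via AM--GM and absorbs the $r_1$ term by H\"older, obtaining the same threshold constants. The only cosmetic difference is that the paper first packages the two quadratic forms into a single $L^3$-Sobolev inequality $\sqrt{Y(M,[g])\,\mu(g)}\,\|\varphi\|_{L^3}^2\le\int_M\left(6|d\varphi|^2+\scal_g\varphi^2\right)dv_g$ and then applies H\"older with exponents $(3,3/2)$, whereas you keep the $L^6$ and $L^2$ norms separate and apply AM--GM at the end; the two computations are identical term by term.
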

\begin{proof}
For any $\vp\in C^\infty(M)$ we have by Hölder inequality
\begin{align*}
Y(M,[g])\, \mu(g)\, \|\vp\|^4_{L^3}& \le Y(M,[g])\,\|\vp\|^2_{L^6}\,
 \mu(g)\, \|\vp\|^2_{L^2}\\
 &\le \left( \int_M \left[8|d\vp|^2+\scal_g \vp^2\right]dv_g\right)\left(\int_M \left[4|d\vp|^2+\scal_g \vp^2\right]dv_g\right)
\end{align*}
By using the inequality $\sqrt{AB}\le \frac{1}{2}\left(A+B\right)$, we obtain the Sobolev inequality 
 $$\forall \vp\in C^\infty(M),\quad
 \sqrt{Y(M,[g])\, \mu(g)}\, \|\vp\|^2_{L^3}\le\int_M \left[6|d\vp|^2+\scal_g \vp^2\right]dv_g.$$
Finally, since $\rho_1=r_1+\frac13\scal_g$, we have 
  \begin{align*}
   6 \int_M \left(\nm{d \vp}^2+\frac 12 \rho_1 \vp^2\right) dv_g \ge\int_M \left(6|d\vp|^2+\scal_g \vp^2-3 r_{1} \vp^2\right)dv_g\\
    \ge \left(\sqrt{Y(M,[g])\, \mu(g)}-3\nnm{r_1}_{L^{3}}\right)\, \nnm{\vp}_{L^3}^2,
\end{align*} 
\end{proof}
According to \tref{thm:sphere}, we obtain
\begin{thm}\label{rst}Let  $(M^3,g)$ be a closed Riemannian manifold and let $\mu(g)$ be the lowest eigenvalue of the operator $4\Delta_g+\scal_g$. If the traceless Ricci tensor satisfies 
  \begin{equation*}
    \|r_{1}\|_{L^{3}}\le \frac{1}{3}\sqrt{Y(M,[g])\, \mu(g)}\qquad\text{or}\qquad \| \rstg\, \|_{L^{3}}\le \frac{1}{\sqrt{6}}\sqrt{Y(M,[g])\, \mu(g)},
  \end{equation*}
then 
\begin{itemize}
\item either $M$ carries a flat Riemannian metric.
\item or $M$ is diffeomorphic to a space form $M\simeq\bS^3/\Gamma $,
\item or $M$ is diffeormophic to $\bS^1\times \bS^2$ or to $\bS^1\times \bP^2(\R)$ or to $\SO(3)\#\SO(3)$.
\end{itemize}
\end{thm}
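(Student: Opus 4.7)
The plan is to combine the Proposition immediately preceding this theorem with Theorem \ref{thm:sphere}. The Proposition asserts that under either pinching hypothesis, the Schrödinger operator $\square_g = \Delta_g + \frac{1}{2}\rho_1$ is nonnegative; Theorem \ref{thm:sphere} then directly yields the stated trichotomy of topological alternatives. Hence the theorem follows as soon as the Proposition is in hand.

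To reduce to a single case, I note that the two pinching hypotheses are not independent: the pointwise inequality $r_1^2 \le \frac{2}{3}\|\rstg\|^2$, recorded earlier in the section and reflecting the fact that the three eigenvalues of the traceless Ricci sum to zero, yields $\|r_1\|_{L^3} \le \sqrt{2/3}\,\|\rstg\|_{L^3}$, so the $\rstg$ hypothesis implies the $r_1$ hypothesis. It therefore suffices to treat the first one.

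The analytic core sits in the Proposition itself. The key move is to manufacture a hybrid Sobolev inequality of exponent $3$ by combining the Yamabe Sobolev inequality with the variational characterization $\mu(g)\|\varphi\|_{L^2}^2 \le \int_M(4|d\varphi|^2 + \scal_g\varphi^2)\,dv_g$: using the interpolation $\|\varphi\|_{L^3}^2 \le \|\varphi\|_{L^6}\|\varphi\|_{L^2}$ together with $\sqrt{AB} \le \frac{1}{2}(A+B)$ one obtains
\begin{equation*}
\sqrt{Y(M,[g])\mu(g)}\,\|\varphi\|_{L^3}^2 \le \int_M \left(6|d\varphi|^2 + \scal_g\varphi^2\right) dv_g.
\end{equation*}
Writing $\rho_1 = r_1 + \frac{1}{3}\scal_g$ and absorbing $\int_M r_1\varphi^2\,dv_g$ via Hölder then gives the nonnegativity of $\square_g$ under the pinching hypothesis.

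There is no genuine obstacle: the deep content, namely the classification of three-manifolds of nonnegative scalar curvature, the Bochner argument of Section \ref{sec:bochner}, and the topological considerations on connected sums of Section \ref{sec:topology}, is entirely absorbed into Theorem \ref{thm:sphere}; and the single nontrivial analytic trick, coupling the conformal Yamabe inequality with the spectral quantity $\mu(g)$ to produce a Sobolev inequality with the dimensional exponent $3$, is already executed in the Proposition. Thus the theorem is a two-step invocation of results in the excerpt.
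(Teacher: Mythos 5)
Your proposal is correct and follows essentially the same route as the paper: the preceding Proposition establishes the nonnegativity of $\square_g$ via the hybrid $L^3$ Sobolev inequality obtained by interpolating the Yamabe inequality with the variational characterization of $\mu(g)$ and applying $\sqrt{AB}\le\frac12(A+B)$, and Theorem~\ref{thm:sphere} then gives the trichotomy. Your explicit reduction of the $\rstg$ hypothesis to the $r_1$ hypothesis via $r_1^2\le\frac23\|\rstg\|^2$ is exactly the relation the paper records at the start of the subsection.
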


\subsection{When the scalar curvature is nonnegative}
We denote by $\kappa_g$ the minimum of the scalar curvature :
$$ \kappa_g=\min_{x\in M} \scal_g(x).$$
\begin{prop}
If $(M^3,g)$ is a closed Riemannian manifold whose traceless Ricci tensor satisfies
\begin{equation*}
    \|r_{1}\|_{L^{2}}\le \frac 13 Y^{\frac34}(M,[g])\, \kappa_g^{\frac14}\qquad \text{or} \qquad \| \rstg\, \|_{L^{2}}\le  \frac1{\sqrt{6}}Y^{\frac34}(M,[g])\, \kappa_g^{\frac14}
\end{equation*}
then the operator $\square_g$ is nonnegative. 
\end{prop}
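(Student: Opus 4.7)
Mirroring the two preceding propositions, the plan is to combine the Yamabe--Sobolev inequality with the pointwise bound $\scal_g\ge\kappa_g$ in order to control the term $\int_M r_1\varphi^2\,dv_g$. Since $\rho_1=r_1+\frac13\scal_g$, one has $\square_g=\Delta_g+\frac16\scal_g+\frac12 r_1$, and nonnegativity (after multiplying by $6$) reduces to proving
\begin{equation*}
\int_M\bigl(6|d\varphi|^2+\scal_g\varphi^2\bigr)\,dv_g\;\ge\;3\,\nnm{r_1}_{L^2}\,\nnm{\varphi}^2_{L^4}\qquad\text{for all }\varphi\in C^\infty(M),
\end{equation*}
the right-hand side arising from H\"older's inequality applied to $\int_M r_1\varphi^2\,dv_g$.

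Set $E(\varphi):=\int_M(8|d\varphi|^2+\scal_g\varphi^2)\,dv_g$ and $F(\varphi):=\int_M\scal_g\varphi^2\,dv_g$. I would first derive
\begin{equation*}
\nnm{\varphi}^2_{L^4}\le Y(M,[g])^{-3/4}\,\kappa_g^{-1/4}\,E(\varphi)^{3/4}\,F(\varphi)^{1/4}
\end{equation*}
by combining the H\"older interpolation $\nnm{\varphi}^4_{L^4}\le\nnm{\varphi}^3_{L^6}\nnm{\varphi}_{L^2}$ with the Yamabe--Sobolev inequality $Y(M,[g])\nnm{\varphi}^2_{L^6}\le E(\varphi)$ and the elementary bound $\kappa_g\nnm{\varphi}^2_{L^2}\le F(\varphi)$. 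The second key point is the algebraic identity, valid because $6=\frac34\cdot 8$ and $1=\frac34+\frac14$:
\begin{equation*}
\int_M\bigl(6|d\varphi|^2+\scal_g\varphi^2\bigr)\,dv_g\;=\;\tfrac34\,E(\varphi)+\tfrac14\,F(\varphi)\;\ge\;E(\varphi)^{3/4}\,F(\varphi)^{1/4},
\end{equation*}
where the last step is weighted AM--GM. Putting these two estimates together reduces the positivity of $\square_g$ exactly to the pinching $\nnm{r_1}_{L^2}\le\frac13\,Y(M,[g])^{3/4}\,\kappa_g^{1/4}$.

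The variant involving $\rstg$ then follows from the pointwise bound $r_1^2\le\frac23\nnm{\rstg}^2$ recalled above the proposition, which gives $\nnm{r_1}_{L^2}\le\sqrt{2/3}\,\nnm{\rstg}_{L^2}$, together with $\frac13\cdot\sqrt{3/2}=\frac1{\sqrt 6}$. I do not anticipate any real obstacle: the matching $3/4$--$1/4$ exponents in the H\"older interpolation and in the weighted AM--GM are precisely what force the exponents $3/4$ on $Y$ and $1/4$ on $\kappa_g$ in the pinching constant, just as in the two preceding propositions.
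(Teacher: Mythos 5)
Your proof is correct and is essentially the paper's own argument: the interpolation $\nnm{\vp}_{L^4}^4\le\nnm{\vp}_{L^6}^3\nnm{\vp}_{L^2}$ combined with the Yamabe--Sobolev inequality, $\kappa_g\nnm{\vp}_{L^2}^2\le\int_M\scal_g\vp^2\,dv_g$, and the weighted AM--GM $A^{3/4}B^{1/4}\le\frac34A+\frac14B$ to produce the functional $\int_M(6|d\vp|^2+\scal_g\vp^2)\,dv_g$, followed by H\"older on $\int_M r_1\vp^2\,dv_g$ and the bound $r_1^2\le\frac23\nnm{\rstg}^2$ for the second variant. The only cosmetic difference is that you make the identity $\int_M(6|d\vp|^2+\scal_g\vp^2)\,dv_g=\frac34E(\vp)+\frac14F(\vp)$ explicit before applying AM--GM, which the paper leaves implicit.
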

\begin{proof}
  By using the same idea we have 
  \begin{align*}
    Y^{\frac34}(M,[g])\, \kappa_g^{\frac14}\, \|\vp\|^2_{L^4}& \le\left(Y(M,[g])\,\|\vp\|^2_{L^6}\right)^{\frac34}\,\left(
    \kappa_g \, \|\vp\|^2_{L^2}\right)^{\frac14}\\
    & \le\left(Y(M,[g])\,\|\vp\|^2_{L^6}\right)^{\frac34}\,\left(
    \int_M \scal_g \vp^2dv_g\right)^{\frac14}\\
    &\le \left( \int_M \left[8|d\vp|^2+\scal_g \vp^2\right]dv_g\right)^{\frac34}\left(\int_M\scal_g \vp^2dv_g\right)^{\frac14}
  \end{align*}
Then, using the inequality $A^{\frac34}B^{\frac14}\le \frac34 A+\frac14 B$, we obtain
  $$\forall \vp\in C^\infty(M),\quad
  Y^{\frac34}(M,[g])\, \kappa_g^{\frac14}\, \|\vp\|^2_{L^4}\le\int_M \left[6|d\vp|^2+\scal_g \vp^2\right]dv_g.$$
  Finally, we have
  \begin{align*}
   6 \int_M\left( \nm{d \vp}^2+\frac 12 \rho_1 \vp^2 dv_g\right) \ge\int_M \left(6|d\vp|^2+\scal_g \vp^2-3 r_{1} \vp^2\right)dv_g\\
    \ge \left( Y^{\frac34}(M,[g])\, \kappa_g^{\frac14}-3\nnm{r_1}_{L^{2}}\right)\, \nnm{\vp}_{L^3}^2,
\end{align*} 

\end{proof}
According to \tref{thm:sphere}, we obtain
\begin{thm}\label{scalar}Let  $(M^3,g)$ be a closed Riemannian manifold and let $\displaystyle\kappa=\min_{x\in M} \scal_g(x)$. If the traceless Ricci tensor satisfies 
  \begin{equation*}
    \|r_{1}\|_{L^{2}}\le \frac 13 Y^{\frac34}(M,[g])\, \kappa^{\frac14}\qquad \text{or} \qquad \| \rstg\, \|_{L^{2}}\le  \frac1{\sqrt{6}}Y^{\frac34}(M,[g])\, \kappa^{\frac14}
  \end{equation*}
then 
\begin{itemize}
\item either $M$ carries a flat Riemannian metric.
\item or $M$ is diffeomorphic to a space form $M\simeq \Gamma\backslash\bS^3 $,
\item or $M$ is diffeormophic to $\bS^1\times \bS^2$ or to $\bS^1\times \bP^2(\R)$ or to $\SO(3)\#\SO(3)$.
\end{itemize}
\end{thm}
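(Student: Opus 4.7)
The plan is to combine the intermediate proposition, which guarantees that the pinching hypothesis forces $\square_g=\Delta_g+\frac12\rho_1$ to be nonnegative, with \tref{thm:sphere}, which then immediately produces the three topological alternatives. The only analytic content to verify is therefore the proposition.

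For that, I would start from the two Sobolev-type inequalities available in the conformal class: the defining Yamabe inequality
\[
Y(M,[g])\|\vp\|_{L^6}^2 \leq \int_M \left(8|d\vp|^2+\scal_g\vp^2\right)dv_g,
\]
and the trivial pointwise lower bound $\kappa_g\|\vp\|_{L^2}^2 \leq \int_M \scal_g\vp^2\,dv_g$. Interpolating these via H\"older with exponents $3/4$ and $1/4$ --- the correct choice on the Lebesgue scale, since $\tfrac14 = \tfrac{3}{4}\cdot\tfrac{1}{6}+\tfrac{1}{4}\cdot\tfrac{1}{2}$ --- yields
\[
Y(M,[g])^{3/4}\kappa_g^{1/4}\|\vp\|_{L^4}^2 \leq \left(\int_M(8|d\vp|^2+\scal_g\vp^2)\,dv_g\right)^{3/4}\left(\int_M \scal_g\vp^2\,dv_g\right)^{1/4}.
\]
Young's inequality $A^{3/4}B^{1/4}\leq \tfrac34 A + \tfrac14 B$ then linearizes the right-hand side into $\int_M(6|d\vp|^2+\scal_g\vp^2)\,dv_g$. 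Using the decomposition $\rho_1 = r_1 + \tfrac13\scal_g$, one checks
\[
6\int_M\left(|d\vp|^2+\tfrac12\rho_1\vp^2\right)dv_g = \int_M(6|d\vp|^2+\scal_g\vp^2)\,dv_g + 3\int_M r_1\vp^2\,dv_g,
\]
and Cauchy--Schwarz bounds the last term below by $-3\|r_1\|_{L^2}\|\vp\|_{L^4}^2$. The stated pinching $\|r_1\|_{L^2}\leq \tfrac13 Y(M,[g])^{3/4}\kappa_g^{1/4}$ therefore renders $\square_g$ nonnegative; the alternative pinching on $\|\rstg\|_{L^2}$ reduces to this via the pointwise bound $|r_1|\leq \sqrt{2/3}\,\|\rstg\|$ (which holds because $r_1\leq 0$ and $r_1+r_2+r_3=0$).

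There is no real obstacle here: once $\square_g\ge 0$ is established, the theorem drops out of \tref{thm:sphere}. The only delicate point is selecting the interpolation exponents so that the gradient coefficient in the resulting $L^4$ Sobolev inequality comes out equal to $6$, which is precisely the normalization required to match the factor $\tfrac12$ in front of $\rho_1$ in $\square_g$ against the identity $3\rho_1-\scal_g=3r_1$. The heavy lifting --- the Bochner identity, the warped-product rigidity, the classification of $3$-manifolds with nonnegative scalar curvature, and the connected-sum topology of \tref{thm:b1} --- has already been absorbed into \tref{thm:sphere}, so the present statement is genuinely a corollary.
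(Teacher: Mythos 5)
Your argument is correct and is essentially identical to the paper's: the authors also interpolate the Yamabe ($L^6$) inequality against the trivial bound $\kappa_g\|\vp\|_{L^2}^2\le\int_M\scal_g\vp^2\,dv_g$ with exponents $\tfrac34,\tfrac14$, apply $A^{3/4}B^{1/4}\le\tfrac34A+\tfrac14B$ to get the $L^4$ Sobolev inequality with gradient coefficient $6$, control the $r_1$ term by Cauchy--Schwarz, and then invoke \tref{thm:sphere}. Your reduction of the $\|\rstg\|_{L^2}$ pinching to the $\|r_1\|_{L^2}$ pinching via $r_1^2\le\tfrac23\|\rstg\|^2$ also matches the paper (and incidentally your final display corrects a small typo in the paper, which writes $\|\vp\|_{L^3}^2$ where $\|\vp\|_{L^4}^2$ is meant).
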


\begin{rem}
\rref{rem:3case} is also valid for \tref{rst} and \tref{scalar}. Moreover, since equality must also be attained in the Hölder inequality, the function $u=\sqrt{|\xi|}$ must be constant. It implies that the Riemannian metric $g$ lifts to a product metric on $\bS^1\times \bS^2$, and that this metric is a Yamabe minimizer.

According to \cite{schoen}, it implies that the product metric on $\bS^1\times\bS^2$ is such that the length of the circle $\bS^1$ is below a certain bound.
\end{rem}

\end{document}